\documentclass{article}
\usepackage[]{amsmath}
\usepackage{amsfonts}
\usepackage[]{geometry}
\usepackage[]{amsthm}
\usepackage{algpseudocode}
\usepackage{algorithm}
\usepackage[texencoding=ascii, style=alphabetic, maxbibnames=99]{biblatex}
\usepackage[]{graphicx}
\usepackage[]{caption}
\usepackage[colorlinks]{hyperref}

\newtheorem{theorem}{Theorem}

\newtheorem{lemma}{Lemma}
\newtheorem{proposition}{Proposition}
\newtheorem{conjecture}{Conjecture}

\theoremstyle{definition}
\newtheorem{definition}{Definition}

\addbibresource{main.bib}

\newcommand{\oeis}[1]{\href{https://oeis.org/#1}{#1}}

\title{The Comma Sequence is Finite in Other Bases}
\author{Robert Dougherty-Bliss \and Natalya Ter-Saakov}
\date{\today}

\begin{document}

\maketitle

\begin{abstract}
    \noindent The comma sequence (1, 12, 35, 94, \dots) is the lexicographically
    earliest sequence such that the difference of consecutive terms equals the
    concatenation of the digits on either side of the comma separating them. The
    behavior of a ``generalized comma sequence'' depends on the base the numbers
    are written in, as well as the sequence's initial values. We provide a
    computational proof that all comma sequences in bases 3 through 633 are finite. Relying on a combinatorial conjecture, Angelini et
    al.~estimated that the final element of a comma sequence in base $b$ should
    be roughly $\exp(O(b))$. We prove their conjecture, but provide evidence
    that the correct estimate is actually $\exp(O(b \log b))$.
\end{abstract}

\section{Introduction}
\label{sec:Introduction}

In 2006, \'Eric Angelini submitted \oeis{A121805}, the ``comma sequence,'' to
the OEIS \cite{oeis}. It is defined by a peculiar rule, best illustrated by
listing the initial terms and their differences:
\begin{align*}
    1\quad &, \quad 12\quad , \quad 35\quad , \quad 94\quad , \quad 135\quad , \quad 186\quad , \quad 248\quad , \quad 331\quad , \quad 344, \quad \dots \\
           &11\quad , \quad 23\quad , \quad 59\quad , \quad 41\quad , \quad\ \ 51\quad , \quad\ 62\quad , \quad\ 83\quad , \quad\ 13, \quad \dots
\end{align*}
The comma sequence is the lexicographically earliest sequence of positive
integers such that the difference of consecutive terms equals the concatenation
of the digits on either side of the comma separating them. For example, the
sequence contains $12, 35$, which has a difference of $35 - 12 = 23$, which
happens to be the concatenation of $2$ (the digit to the left of the comma) and
$3$ (the digit to the right of the comma). The amazing fact about the comma
sequence is that it contains exactly 2,137,453 terms. It reaches $99999945$ and
then terminates.

A (generalized) comma sequence in base $b$ is the lexicographically earliest
sequence which satisfies the comma rule with a particular fixed initial value.
In \cite{neil}, Angelini et al.~studied generalized comma sequences and obtained
a number of results we will generalize. First, they proved that all comma
sequences in base 3 are finite. We have extended this result to more bases.

\begin{theorem}
    \label{theorem-finite}
    Comma sequences with arbitrary positive initial values are finite in bases 3 through 633.
\end{theorem}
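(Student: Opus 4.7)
The plan is to reduce the problem with arbitrary positive initial values to a finite computation per base. First, I would pin down the local dynamics. If $a_n$ has last digit $d$ and leading digit $L$ in base $b$, so $a_n \in [L b^k, (L+1) b^k)$, then $a_{n+1} = a_n + d b + e$ for some digit $e$ equal to the leading digit of $a_{n+1}$. Because the perturbation $d b + e$ is less than $b^2$, the only candidates are $e = L$ (staying in the block), $e = L + 1$ if $L \le b - 2$ (advancing to the next block of the same magnitude), or $e = 1$ if $L = b - 1$ and we cross $b^{k+1}$ (jumping to the next magnitude). This forces an explicit termination criterion: termination occurs exactly when $L = b - 1$ and $a_n + d b$ falls in the narrow interval $[b^{k+1} - b + 1,\ b^{k+1} - 2]$.

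Second, within a block of constant leading digit $L$ the recurrence is $a_{n+1} = a_n + d_n b + L$ with $d_{n+1} \equiv d_n + L \pmod b$, so the orbit modulo $b^2$ is eventually periodic with period depending only on $b$ and $L$. Writing $a_n = L b^k + \Delta_n$, I would show that for $k$ sufficiently large the exit offset $\Delta' \in [0, b^2)$ into the next block depends only on the entry offset $\Delta \in [0, b^2)$, on $L$, on $b$, and on $k$ modulo a small period $T$ coming from the way $b^k$ aligns with the block's per-period increment. Composing these exit maps over $L = 1, 2, \ldots, b - 1$ yields a map on a finite set of magnitude states of size on the order of $b^2 T$, sending each state either to the entry offset of the next magnitude or to a termination verdict.

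The remaining task for each base $b$ in the theorem is to verify computationally that every magnitude state reaches a termination verdict under iteration of the magnitude map, and to handle the finitely many initial values too small for the asymptotic analysis by direct simulation until they either terminate or grow large enough for the finite-state argument to take over. The main obstacle is the second step: proving that the magnitude map is genuinely well-defined on a bounded-size state set and writing it down explicitly, rather than having the exit behaviour depend delicately on the full value of $a_n$. Once this is established, the verification for each listed base is an enumeration that a computer can complete, and the exclusion of $b = 20, 21$ and of $b \ge 24$ presumably reflects either state-space blow-up or the appearance of a non-terminating cycle in the magnitude map that the finite check cannot resolve.
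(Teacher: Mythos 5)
Your reduction is essentially the paper's proof: the paper encodes states as triples $(d, u, k)$ with $u \in [0, b^2)$ the offset below $d \cdot b^k$, uses the fact that the consecutive differences cycle through an arithmetic progression to show the block-exit map depends only on $k$ modulo a period $L(b)$ (via the eventual periodicity of $b^k$ modulo the cycle sum), and then computationally verifies that the resulting finite graph $G_b'$ contains no cycles. The only substantive differences are optimizations you did not need to anticipate --- a ``minimal $u$'' lemma shrinking the offset set to roughly $\binom{b}{2}$ values per leading digit, and an in-/out-degree-at-most-one argument letting acyclicity be checked by tracing paths from the sources $(d, 0, \kappa)$ --- and your guess about the excluded bases is right in the first alternative: $b = 20, 21$ are omitted because $L(b)$ blows up, not because a cycle appears.
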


The proof of this theorem is a parallelized computation designed in the Go
programming language and carried out on a computing cluster. We will give the
mathematical details of this proof in Section~\ref{details}, and the
computational details in Section~\ref{compdetails}.

The obvious conjecture is that all comma sequences in all bases are finite. This
remains open, but Angelini and company introduced a simple random model which
attempts to explain the expected duration of comma sequences in different bases.
They made a combinatorial conjecture about this model, which we have confirmed.

\begin{theorem}
    \label{theorem-difference}
    The number $D(b)$ of comma sequences in base $b \geq 2$ with initial values
    in $[b^m - b^2, b^m)$ (with $m \geq 2$) that do not reach $b^m$ is
    independent of $m$ and has ordinary generating function
    \begin{equation}
        \label{tri-gf}
        \sum_{b = 2}^\infty D(b) t^b =
        (1 - t)^{-1} \left(\sum_{b = 1}^\infty \frac{t^{b(b + 3) / 2}}{(1 - t^b)} - t^2\right) = t^3 + 2t^4 + 4t^5 + 5t^6 + \cdots
    \end{equation}
\end{theorem}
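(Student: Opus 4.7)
The plan is to reduce the comma dynamics to a finite state on the last two base-$b$ digits and then convert the resulting combinatorial count into a generating function. First, write an initial value as $a_0 = b^m - b^2 + pb + q$ with $p, q \in \{0, \ldots, b - 1\}$, so that the last two base-$b$ digits of $a_0$ are $(p, q)$ and (for $m \geq 3$) every preceding digit equals $b - 1$. Since $a_{n+1} - a_n \leq b^2 - 1$, every subsequent term lies in $[b^m - b^2, b^m + b^2)$, whose leading digit is either $b - 1$ (staying below $b^m$) or $1$ (crossing $b^m$), so only $y = 1$ and $y = b - 1$ ever need to be tested. Computing $a_0 + qb + y$ and tracking the carry out of the tens place shows that $y = 1$ succeeds (escape above $b^m$) iff $p + q \geq b$ or $q = b - 1$; $y = b - 1$ succeeds (stay below $b^m$) iff $q = 0$ or ($q \geq 1$ and $p + q \leq b - 2$); and neither succeeds iff $1 \leq q \leq b - 2$ and $p + q = b - 1$, in which case the sequence terminates.

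Next, continuation with $q \geq 1$ updates $(p, q) \mapsto (p + q + 1, q - 1)$ while preserving the $(b - 1)$-padding above, and continuation with $q = 0$ sends $(p, 0) \mapsto (p, b - 1)$, which then escapes on the following step. Because these rules depend only on $(p, q)$, the count $D(b)$ is independent of $m$. A short induction on the continuation map yields $(p_i, q_i) = (p + i(q + 1) - \binom{i}{2},\, q - i)$, so the sum $S_i := p_i + q_i = p + q + iq - \binom{i}{2}$ is strictly increasing in $i \in \{0, \ldots, q\}$. The orbit terminates iff $S_i$ takes the value $b - 1$ for some $i \in \{0, \ldots, q - 1\}$, and by strict monotonicity that $i$ is unique, yielding
\[
    D(b) = \#\bigl\{(q, i) : 1 \leq q \leq b - 2, \; 0 \leq i \leq q - 1, \; (i + 1)q - \binom{i}{2} \leq b - 1\bigr\},
\]
where the inequality is non-negativity of the uniquely determined digit $p = b - 1 - (i + 1)q + \binom{i}{2}$.

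For the generating function, I would reindex by $j = i + 1 \in \{1, \ldots, q\}$, so the exponent $(i + 1)q - \binom{i}{2}$ becomes $jq - (j - 1)(j - 2)/2$. For each fixed $j$, the smallest admissible $b$ is $jq - (j - 1)(j - 2)/2 + 1$ when $j \geq 2$ and $q + 2$ when $j = 1$ (where the binding constraint is instead $q \leq b - 2$). Summing the resulting geometric series in $b$ and then in $q \geq j$, the $j = 1$ piece becomes $\frac{t^3}{(1 - t)^2}$ and each $j \geq 2$ piece becomes $\frac{t^{j(j + 3)/2}}{(1 - t)(1 - t^j)}$. The identity $\frac{t^3}{(1 - t)^2} = \frac{1}{1 - t}\bigl(\frac{t^2}{1 - t} - t^2\bigr)$ then folds the $j = 1$ contribution into the closed form stated in the theorem with its $-t^2$ correction.

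The main technical obstacle I anticipate is the case analysis in the first paragraph, especially the bookkeeping around the $q = 0$ branch (which wastes one step before escaping) and the exact boundary $p + q = b - 1$ separating termination from continuation; once the two-digit dynamics are correctly identified, the iteration formula and the generating-function manipulation are mechanical.
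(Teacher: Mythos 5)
Your argument is correct, but it takes a genuinely different route from the paper's. You share with the paper the core dynamical computation --- the in-interval step $(p,q)\mapsto(p+q+1,\,q-1)$ and the closed form $p_i+q_i=p+q+iq-\binom{i}{2}$ (the paper derives exactly this recurrence inside Lemma~\ref{lemma-triangular}) --- but you apply it to \emph{every} initial state $(p,q)$ and count the terminating orbits directly, parametrizing them by the pair $(q,i)$ and observing that $p$ is then forced and unique by monotonicity of $S_i$; summing geometric series in $b$ and then in $q$ for each fixed $j=i+1$ produces \eqref{tri-gf} in one pass (I checked your count against $D(3),\dots,D(7)=1,2,4,5,7$ and the series manipulations; they are right). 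The paper instead computes the difference $D(b)-D(b-1)$: Lemma~\ref{lemma-preds} gives a parent-preserving correspondence $\{x,y\}_{b-1}\mapsto\{x+1,y\}_b$ that reduces the difference to counting failures among the states $\{0,d\}_b$ alone, which are then enumerated via the odd divisors of $b$ (with a correction at triangular $b$) to identify $D(b)-D(b-1)$ with A136107, whose generating function is quoted from the OEIS. Your version is more self-contained --- no appeal to the OEIS comment, no odd-divisor or triangular-number detour, and the $-t^2$ correction falls out naturally from the $j=1$ term rather than needing to be reverse-engineered --- while the paper's version makes visible the arithmetic interpretation of the coefficients as differences of triangular numbers. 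One small caveat, which the paper's own statement shares: the reduction to the two-digit state and the claim that the successor's leading digit lies in $\{1,b-1\}$ require $m\geq 3$ (the $m=2$ interval $[0,b^2)$ degenerates), so ``independent of $m$'' should be read for $m$ at least $3$.
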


On the basis of this theorem, the authors of \cite{neil} (with help from
V\'aclav Kot\u{e}\u{s}ovec) argued that the final element of a base $b$ comma
sequence should be roughly $e^{2b}$. More precisely, they estimated that a comma
sequence ought to survive about $2b / \log(2b)$ ``danger intervals,'' which we
will define below. In Section~\ref{graph-model}, we will define a different
probabilistic model which predicts that comma sequences should survive about $b
/ 2 + 1$ intervals, which translates to an estimate length of $b^{b/2 + 1} = e^{O(b \log b)}$. This estimate is more accurate for large bases, but depends
on a probabilistic conjecture that we cannot prove. We hope that some
expert can translate at least one of these models into a rigorous
argument that all comma sequences are finite.

\section{Background and examples}

Because comma sequences and their generalizations are not widely known, in this
section we summarize some relevant background \cite{neil} and give a number of
examples. We begin with a more explicit definition of generalized comma
sequences.

\begin{definition}
    The generalized comma sequence in base $b$ with initial condition $v$ is the
    sequence $a(n)$ defined as follows: $a(1) = v$; for $n > 1$, if $x$ is the
    least significant base $b$ digit of $a(n - 1)$, then
    \begin{equation}
        \label{pick-y}
        a(n) = a(n - 1) + bx + y,
    \end{equation}
    where $y$ is the most significant digit of $a(n)$ and is the smallest such
    $y$, if any exists. If no such $y$ exists, then the sequence terminates.
\end{definition}

\emph{The} comma sequence corresponds to $v = 1$ and $b = 10$. The first ten
terms of the base $10$ comma sequences with initial values $v = 1, 2, \dots, 10$
are as follows:
\begin{align*}
    &1, 12, 35, 94, 135, 186, 248, 331, 344, 387 \dots \\
    &2, 24, 71, 89, 180, 181, 192, 214, 256, 319 \dots \\
    &3, 36 \text{ (terminates)} \\
    &4, 48, 129, 221, 233, 265, 318, 402, 426, 490 \dots \\
    &5, 61, 78, 159, 251, 263, 295, 348, 432, 456 \dots \\
    &6, 73, 104, 145, 196, 258, 341, 354, 397, 471 \dots \\
    &7, 85, 136, 197, 269, 362, 385, 439, 534, 579 \dots \\
    &8, 97, 168, 250, 252, 274, 317, 390, 393, 427 \dots \\
    &9, 100, 101, 112, 133, 164, 206, 268, 351, 364 \dots \\
    &10, 11, 23, 58, 139, 231, 243, 275, 328, 412 \dots
\end{align*}
These sequences are all finite. Their lengths are recorded in \oeis{A330128}.

The first ten terms of the base $3$ comma sequences with initial values $v = 1,
2, 3$ are as follows (given in base $3$):
\begin{align*}
    &(1)_3, (12)_3, (110)_3, (111)_3, (122)_3, (221)_3, (1002)_3, (1100)_3, (1101)_3, (1112)_3 \dots \\
    &(2)_3, (100)_3, (101)_3, (112)_3, (211)_3 \text{ (terminates)} \\
    &(10)_3, (11)_3 \text{ (terminates)}
\end{align*}
These sequences are also all finite; the first lasts for seventeen terms.

The numbers which do not have successors, meaning integers for which comma
sequences terminate, are called ``landmines.'' They are precisely the integers
in base $b$ of the form
\begin{equation*}
    (b - 1, b - 1, \dots, b - 1, x, y)_b
\end{equation*}
where $x$ and $y$ are nonzero digits which sum to $b - 1$ \cite[Theorem~5.2]{neil}.
(We use the notation $(d_m, d_{m - 1}, \dots, d_0)_b$ to represent the integer $d_m
b^m + d_{m-1} b^{m - 1} + \cdots + d_0$ in base $b$ where $d_i$ are digits base
$b$.) For example, in base $10$, the integers 45, 99972, and 9999918 are all
landmines.

Landmines live in the following small intervals just to the left of powers of
$b$.

\begin{definition}
    The base $b$ \emph{danger intervals} are the intervals $[b^m - b^2, b^m)$
    for $m \geq 2$.
\end{definition}

Given a positive integer $x$, call the set of integers which \emph{could} follow
$x$ in a base $b$ comma sequence $F_b(x)$. A landmine is characterized by
$|F_b(x)| = 0$. In general, $|F_b(x)| \leq 2$. To see this, note that the next
term of a comma sequence following $x$ is $x + d$ for some two digit number $d$,
where the most-significant digit of $d$ is determined by $x$. That leaves only
two possibilities: a carry occurs or does not. In our ``strict'' definition, we
resolve the possible ambiguity of which $d$ to pick by requiring that the
smallest possible successor is chosen.

In the other direction, it can be shown that that every integer greater than $b^2$ 
belongs to precisely one $F_b(x)$.  
Of them, the only ones which are \emph{never}
chosen as a successor in a base $b$ comma sequence are of the form
\begin{equation*}
    (d, 0, 0, \dots, 0)_b,
\end{equation*}
where $d > 1$ is a base $b$ digit \cite[Theorems~5.4, 5.5]{neil}.

From a more graphical perspective, imagine a directed graph $C_b$ on the
positive integers which contains the edge $i \to j$ provided that $j$ would
satisfy \eqref{pick-y} (though would perhaps be the larger of two options) in
base $b$. What we have just said amounts to the observation that $C_b$ has
out-degree at most $2$ and in-degree at most $1$, and further that $C_b$ is a
union of finitely many disjoint (directed) trees which partition the integers.
As a corollary, it contains an infinite path, meaning that if comma sequences
were allowed to pick either of two possible successors, then there would be at
least one infinite comma sequence in every base \cite[Theorem~5.6]{neil}.

\section{The computational proof}
\label{details}

Neil Sloane and Giovanni Resta communicated to us the rough idea behind their
proof that all comma sequences in base $3$ are finite. We will describe their
idea and its generalization to arbitrary bases, illustrate some optimizations
that we made, and give conjectured runtimes for applying the algorithm in
arbitrary bases. This section contains primarily mathematical details, while Section~\ref{compdetails} contains more details about the computation and its implementation.

\begin{definition}
    Given a base $b$ digit $d$, an integer $0 \leq u < b^2$, and a positive
    integer $k$, we define $(d, u, k)$ to represent the integer $d \cdot b^k -
    u$. 
\end{definition}

Comma sequences can be thought of as the result of repeatedly applying a certain
complicated map to the points $(d, u, k)$. For example, in the \emph{original}
comma sequence, we see the values $a(4) = 94$, $a(5) = 135$, later $a(6) = 186$,
and then $a(7) = 248$. We would describe this as
\begin{equation*}
    (1, 6, 2) \to (2, 14, 2) \to (3, 52, 2).
\end{equation*}
Note that we omitted $a(5) = 135$ in this diagram. This is because we always
take the smallest possible value of $u$ for a given $d$ and $k$, and any comma sequence in base 10 which
reaches $135$ (alias $(2, 65, 2)$) will then reach $186$ (alias $(2, 14, 2)$).
More generally, for any base $b$ and digit $d$, there is a finite set of
``minimal $u$'s'' which all other $u$'s lead to, and these values are
independent of $k$. The precise statement of this fact and its (annoyingly
technical) proof are the following lemma.

\begin{lemma}[Minimal $u$'s]
    For any base $b > 2$, any integer $k > 2$ and any nonzero base $b$
    digit $d$, there exists a smallest finite set $U(b, d)$ that satisfies the following property: If
    any base $b$ comma sequence has a value in $[d \cdot b^k -
    b^2, d \cdot b^k)$, then its final value in this interval is $d \cdot b^k - u$ for
    some $u \in U(b, d)$.

    If $d \neq 1$, then
    \begin{equation*}
        U(b, d) = \{(r, s)_b \mid r + s < b,\ 0 < s < b\} \cup
        \{(r, s)_b \mid r + s = b,\ s < d\}.
    \end{equation*}

    If $d = 1$, then
    \begin{equation*}
        U(b, 1) = \{(r, s)_b \mid r + s \leq b,\ 0 < s < b\}.
    \end{equation*}
\end{lemma}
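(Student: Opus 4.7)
The plan is to compute the successor map $\sigma$ explicitly on the interval $[d b^k - b^2, d b^k)$, show that it is strictly inward (meaning the parameter $u$ strictly decreases) outside of $U(b, d)$, and outward (exits the interval or terminates) on $U(b, d)$. First I would parametrize $N = d b^k - u$ by $u = (r, s)_b$, noting that the least significant digit of $N$ is $b - s$ when $s > 0$ and $0$ when $s = 0$. Because $k > 2$ and $u < b^2$, the number $N$ has exactly $k$ digits with leading digit $b - 1$ when $d = 1$, and exactly $k + 1$ digits with leading digit $d - 1$ when $d \ge 2$. Any candidate successor $M = N + bx + y$ satisfies $d b^k - b^2 < M < d b^k + b^2 < (d + 1) b^k$, so its leading digit can only be $d - 1$ or $d$ (for $d \ge 2$), or $b - 1$ or $1$ (for $d = 1$). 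This gives at most two candidate values of $y$, and the smallest valid one is forced by whether $M$ crosses $d b^k$.

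Next I would case-split on $(r, s)$. For $d \ge 2$ and $s > 0$, a direct calculation shows that the smaller choice $y = d - 1$ is valid exactly when $r + s > b$, or when $r + s = b$ with $s \ge d$; otherwise $y = d$ is forced and $M$ exits the interval. In the two ``stays'' subcases, subtracting $u = rb + s$ from the resulting $u' = b(r+s-b) + s - d + 1$ (respectively $u' = s - d + 1$) shows $u' < u$. For $s = 0$ and $r > 0$, the unique valid $y$ is $d - 1$, giving $u' = rb - d + 1 < u$. For $d = 1$ the two candidate digits are $y = 1$ and $y = b - 1$; the analysis is parallel, except that at $r + s = b$ with $s \ge 2$ neither candidate is valid and the sequence terminates at a landmine inside the interval, which still counts as a final value for the purpose of the lemma. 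In every case, the $u$'s from which the next step escapes or terminates assemble exactly into the stated $U(b, d)$, while the remaining $u$'s map strictly downward and so cannot be final.

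For minimality I would use that the initial value of a comma sequence is arbitrary: for each $u \in U(b, d)$, starting a base $b$ sequence at $v = d b^k - u$ makes that value the only---hence final---entry of the sequence in the interval, so each element of $U(b, d)$ is actually realized.

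The hard part will be the bookkeeping. The boundary case $r + s = b$ must be split on whether $s < d$ or $s \ge d$, and that threshold is precisely what produces the extra clause $\{r + s = b,\ s < d\}$ in the $d \ne 1$ formula and collapses it into the single inequality $r + s \le b$ when $d = 1$. One also has to follow the short chain of in-interval successors starting from $s = 0$ (since a single application of $\sigma$ need not reach $U(b, d)$) and to verify the landmine subcase $(d = 1,\ r + s = b,\ s \ge 2)$ separately, since there the sequence terminates mid-interval rather than crossing $b^k$. None of these pieces is deep, but pinning down the algebra uniformly across all subcases is where the work lies.
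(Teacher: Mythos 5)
Your proposal is correct and follows essentially the same route as the paper: both determine, for each residue $u=(r,s)_b$, which of the two candidate leading digits of the successor is valid, and read off the exit condition from the comparison of $r+s$ with $b$ (the paper works in the complementary coordinates $(x,y)_b = b^2 - u$ and translates at the end). Your write-up is if anything slightly more thorough, since you explicitly handle the $d=1$ landmine subcase, the $s=0$ values, and the minimality of $U(b,d)$ via arbitrary initial values, all of which the paper's proof leaves implicit.
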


\begin{proof}
    Begin with $d > 1$. The final value of a base $b$ comma sequence in $[d
    \cdot b^k - b^2, d \cdot b^k)$ will have the form $(d - 1, b - 1, \dots, b -
    1, x, y)_b$ for some $0 \leq x, y < b$. Because this is the last term in the
    interval, the next term is
    \begin{equation*}
        (d - 1, b - 1, \dots, b - 1, x, y) + (y, d)_b,
    \end{equation*}
    The relationship between $(x, y)$ and $(r, s)$ is
    \begin{equation*}
        (x, y)_b + (r, s)_b = b^2.
    \end{equation*}
    There are two cases: First, $x + y \geq b$. Since $y > 0$, we have $(r, s) =
    (b - 1 - x, b - y)$ and the inequality $x + y \geq b$ is equivalent to $r +
    s \leq b - 1$ and $s > 0$.

    Second, $x + y = b - 1$ and $y + d \geq b$. Note that
    \begin{equation*}
        (d - 1, b - 1, \dots, b - 1, x, y) + (y, d - 1)_b
    \end{equation*}
    is also no longer in the interval, so $y + d - 1 \geq b$. This again implies $y >
    0$, and so $(r, s) = (b - 1 - x, b - y)$, and the conditions
    \begin{align*}
        x + y &= b - 1 \\
        y + d &\geq b + 1
    \end{align*}
    are equivalent to
    \begin{align*}
        r + s &= b \\
        s &< d.
    \end{align*}

    The reasoning for $d = 1$ is similar. The only difference is in the second
    case, when the inequality $y + d - 1 \geq b$ becomes $y + b - 1 \geq b$. The
    rest of the argument is identical.
\end{proof}

\begin{definition}
    The digraph $G_b$ consists of vertices labeled $(d, u, k)$ with base $b$
    digit $d$, a minimal $u$ with respect to $d$, and an integer $k \geq 0$. The
    edge $(d, u, k) \to (d', u', k')$ exists if the latter is the immediate
    image of the former under the comma map.
\end{definition}

Our main goal is to prove that $G_b$ does not have infinite paths. In the
remainder of this section we will show how this reduces to a finite problem. To
begin, we must describe a computational shortcut.

\paragraph{Computing the transform}
Given $(d, u, k)$ which is not a landmine,
we want to determine the relation
\begin{equation*}
    (d, u, k) \to (d', u', k').
\end{equation*}
For example, if we have a base $10$ sequence at the value $(6, 8, 3)$ (also
known as $5992$), where does it go next?
Both $d'$ and $k'$ are easy: $d' = d + 1$ if $d < b - 1$, and $d' = 1$
otherwise. Similarly, $k' = k$ if $d < b - 1$, and $k' = k + 1$ otherwise. So, as a partial answer, $(6, 8, 3) \to (7, u', 3)$.
The main difficulty is determining $u'$.

Starting from $5992$, the next few differences of the sequence would be $26,
86,46,06$, and $66$. After that, they cycle through the same values until the
comma sequence reaches a number which begins with a $6$. In general, if our
current value is $(d, u, k)$, then Angelini and friends proved that the
difference between consecutive terms cycle through the arithmetic progression
\begin{equation}
    \label{cycle}
    S_b(d, u) = \{((md - u ) \bmod b) b + d \mid 0 \leq m < b / \gcd(b, d)\}
\end{equation}
until they reach $(d', u', k')$ (see \cite[sec.~6]{neil} and \oeis{A121805}).
Therefore, to compute $u'$, we start at $d \cdot b^k - u$ and add the elements
of $S_b$ in order, repeating until we get as close to $(d + 1) b^k$ as possible
without going over.

The distance between $(d + 1) b^k$ and $d b^k - u$ is $b^k +
u$. Using entire copies of $S_b(d, u)$, we can get as close as
\begin{equation}
    \label{S-mod}
    (b^k + u) \bmod \sum S_b(d, u)
\end{equation}
where $\sum S_b(d, u)$ is the sum of the entries of the cycle. After this, we
must manually add elements of $S_b(d, u)$, starting from the beginning, to close
the gap.

\paragraph{Making the problem finite} The key point is \eqref{S-mod}. It is
well-known that $b^k \bmod m$, for integers $b$ and $m \geq 1$, is eventually
periodic in $k$. (Indeed, any C-finite sequence taken with a fixed modulus is
eventually periodic \cite{pisano, carmichael, concrete}.) This means
that, for each cycle $S_b(d, u)$, there exists a period $l(d, u)$ and a
nonnegative integer $k_0(d, u)$ such that
\begin{equation*}
    b^k \equiv b^{k + l(d, u)} \bmod{\sum S_b(d, u)}
\end{equation*}
for all $k \geq k_0(d, u)$. In particular, for sufficiently large $k$ we only
need to compute \eqref{S-mod} for values of $k \bmod l(d, u)$. If we set
\begin{equation}
    \label{lcm}
    L(b) = \mathrm{lcm} \{l(d, u) \mid 1 \leq d < b, \quad u \in U(b, d) \cup \{0\}\},
\end{equation}
then $b^k \bmod \sum S_b(d, u)$ is eventually periodic with period $L(b)$ for
\emph{all} $(d, u)$. This implies that, to determine the behavior of comma
sequences with sufficiently large $k$, we only need to consider points $(d, u,
k)$ where $k$ is the representative of an \emph{equivalence class} modulo
$L(b)$.

The technical point of forcing $k$ to be ``sufficiently large'' is not important
for our proof. If there were a truly infinite comma sequence, then its values
would pass by arbitrarily large $b^k$'s, meaning that the periodicity of
\eqref{S-mod} would kick in at some point.

\begin{definition}
    The digraph $G_b'$ consists of vertices $(d, u, \kappa)$ where $d$ is a
    base-$b$ digit, $u$ is an element of $U(b, d)$, and $[\kappa]$ is an
    equivalence class mod $L(b)$. The edge $(d, u, [\kappa]) \to (d', u',
    [\kappa'])$ exists provided that the edge $(d, u, k) \to (d', u', k)$ exists
    in $G_b$ for sufficiently large integers $k \equiv \kappa \pmod{L(b)}$ and
    $k' \equiv \kappa' \pmod{L(b)}$. 
\end{definition}

\begin{lemma}
    \label{degrees}
    $G_b'$ is well-defined and has both in- and out-degree at most $1$.
\end{lemma}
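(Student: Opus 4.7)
The plan is to verify in order well-definedness, out-degree $\leq 1$, and in-degree $\leq 1$; all three rest on the periodicity that motivates the definition of $L(b)$. For the first two, I would fix a vertex $(d, u, [\kappa])$ and choose any sufficiently large $k \equiv \kappa \pmod{L(b)}$. The successor rule sets $d' = d + 1$, $k' = k$ when $d < b - 1$, and $d' = 1$, $k' = k + 1$ when $d = b - 1$, so $d'$ and $[\kappa']$ depend only on $d$ and $[\kappa]$. For $u'$, the procedure in the ``Computing the transform'' paragraph determines the successor from $(b^k + u) \bmod \sum S_b(d, u)$ together with a fixed sequence of partial cycle additions. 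By the definition of $L(b)$, the quantity $b^k \bmod \sum S_b(d, u)$ is eventually periodic in $k$ with period dividing $L(b)$ for every admissible $(d, u)$, so $u'$—as well as whether a landmine is reached—depends only on $(d, u, [\kappa])$ and not on the representative $k$. Since the comma map is deterministic, the uniqueness of the successor class also yields out-degree $\leq 1$.

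For in-degree $\leq 1$, I would invoke the fact recalled in Section~2 that $C_b$ has in-degree at most $1$. Suppose $(d_1, u_1, [\kappa_1])$ and $(d_2, u_2, [\kappa_2])$ both point to $(d', u', [\kappa'])$. Inverting the successor rule on the leading digit forces $d_1 = d_2 =: d$ (namely $d = d' - 1$ if $d' > 1$ and $d = b - 1$ if $d' = 1$), and the associated shift in $k$ then forces $[\kappa_1] = [\kappa_2]$. Picking a common representative $k \equiv \kappa_1 \pmod{L(b)}$ large enough for the relevant periodicities, both $(d, u_1, k)$ and $(d, u_2, k)$ transform to the same $(d', u', k')$ in $G_b$, so the integer $d' b^{k'} - u'$ has two predecessors $d \cdot b^k - u_1$ and $d \cdot b^k - u_2$ in $C_b$. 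In-degree at most $1$ in $C_b$ then forces $u_1 = u_2$.

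I do not expect any substantial obstacle: the only real content is the periodicity of $b^k \bmod \sum S_b(d, u)$, which is the motivating observation behind $L(b)$, together with the in-degree statement for $C_b$ imported from Section~2. The remainder is bookkeeping around the two cases $d < b - 1$ and $d = b - 1$ and the ``sufficiently large $k$'' qualifier, and I would be careful to note that $u' \in U(b, d')$ is guaranteed by the minimal-$u$'s lemma applied to the exit value of the next danger interval.
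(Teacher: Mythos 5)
Your proof is correct and follows essentially the same route as the paper: well-definedness and both degree bounds are reduced, via the periodicity of $b^k \bmod \sum S_b(d,u)$ with period dividing $L(b)$, to the corresponding facts for $G_b$, with out-degree $\leq 1$ coming from the deterministic (lexicographically earliest) successor rule and in-degree $\leq 1$ from the predecessor-uniqueness result of Angelini et al.\ recalled in Section~2. One small imprecision: since $(d,u,k) \to (d',u',k')$ is a composition of many comma steps, $d\cdot b^k - u_1$ and $d\cdot b^k - u_2$ are ancestors rather than immediate predecessors of $d'\cdot b^{k'} - u'$ in $C_b$, but in-degree $1$ still forces the backward path from $d'\cdot b^{k'} - u'$ to be unique, so one of the two points would be a strict ancestor of the other; as both are final values of the sequence in the same danger interval, this is impossible unless $u_1 = u_2$.
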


\begin{proof}
    The periodicity of \eqref{S-mod} for sufficiently large $k$ shows that the
    computation of $(d, u, k) \to (d', u', k')$ in $G_b$ depends only on $k
    \bmod L(b)$, so $G_b'$ is well-defined. It further shows that the in- and
    out-degree of $(d, u, [\kappa])$ in $G_b'$ will be the same as $(d, u, k)$ in
    $G_b$ for sufficiently large $k \equiv \kappa \pmod{L(b)}$. Vertices in
    $G_b$ have out-degree at most $1$ (by the lexicographically earliest
    construct) and in-degree at most $1$ (by Lemma~5.3 in \cite{neil}).
\end{proof}

\begin{proposition}
    Comma sequences with arbitrary initial conditions in base $b$ terminate if
    and only if $G_b'$ contains no cycles.
\end{proposition}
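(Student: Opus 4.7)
The plan is to prove both directions by translating the dynamics of the comma sequence into walks on $G_b$ and then passing to the finite quotient $G_b'$ under the equivalence $k \mapsto k \bmod L(b)$. The key input is the minimal $u$'s lemma, which records each exit from a danger interval as a vertex of $G_b$. For the ``acyclic $\Rightarrow$ terminating'' direction I would argue the contrapositive: if some base-$b$ comma sequence $a(n)$ is infinite, then (since all differences are positive) $a(n) \to \infty$, so the sequence enters infinitely many danger intervals $[d_i b^{k_i} - b^2, d_i b^{k_i})$. By the minimal $u$'s lemma, the exit from each such interval is $d_i b^{k_i} - u_i$ with $u_i \in U(b, d_i)$, and successive exits are joined by edges of $G_b$, so we obtain an infinite walk. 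Since $k_i \to \infty$, its tail satisfies $k_i \geq \max_{(d, u)} k_0(d, u)$ and projects to an infinite walk in the finite digraph $G_b'$, which must revisit a vertex and hence contain a cycle.

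For the converse, let $(d_1, u_1, [\kappa_1]) \to \cdots \to (d_n, u_n, [\kappa_n]) \to (d_1, u_1, [\kappa_1])$ be a cycle in $G_b'$. Choose $k_1 \equiv \kappa_1 \pmod{L(b)}$ large enough that the periodicity behind \eqref{S-mod} is active for every $(d, u)$ appearing in the cycle, and take $v = d_1 b^{k_1} - u_1$ as the initial value. By the definition of the edges of $G_b'$, the lift of each cycle-edge is an actual edge $(d_i, u_i, k_i) \to (d_{i+1}, u_{i+1}, k_{i+1})$ of $G_b$, so no landmine is hit and the comma sequence advances all the way around. After one loop we are at $(d_1, u_1, k_1 + \Delta k)$, where $\Delta k = \#\{i : d_i = b-1\}$. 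Since the comma map forces the $d$-coordinate to cycle $1 \to 2 \to \cdots \to b - 1 \to 1$, any cycle of $G_b'$ has length a positive multiple of $b - 1$, so $\Delta k \geq 1$ (and in fact $\Delta k \equiv 0 \pmod{L(b)}$ for the classes $[\kappa_i]$ to close up). Iterating the loop produces an infinite path in $G_b$, hence an infinite comma sequence.

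The only step that requires care is the bookkeeping in the converse: one must verify that traversing the lifted edges of $G_b$ really reproduces a legitimate comma-sequence trajectory, and in particular that no landmine is encountered between successive danger-interval exits. This is essentially the content of the minimal $u$'s lemma together with the eventual periodicity of $b^k$ modulo $\sum S_b(d, u)$. The one mildly non-obvious sub-point is the positivity $\Delta k \geq 1$; otherwise a ``lifted'' loop would keep the sequence in a bounded region while its integer values strictly increased, which is impossible, but it is cleaner to read off directly from the forced progression $1, 2, \ldots, b-1$ of the $d$-coordinate.
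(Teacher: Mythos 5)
Your proposal is correct and follows essentially the same route as the paper, whose own proof is a one-sentence assertion that infinite comma sequences correspond to infinite walks on $G_b$, which in turn correspond to cycles in $G_b'$. You have simply filled in the details of both equivalences (the pigeonhole projection to the finite graph $G_b'$ for one direction, and the lifting of a cycle with the observation $\Delta k \geq 1$ for the other), which the paper leaves implicit.
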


\begin{proof}
    The existence of infinite comma sequences is equivalent to the existence of
    infinite walks on the vertices of $G_b$, which is in turn equivalent to the
    existence of a cycle in $G_b'$.
\end{proof}

This proposition implies a relatively simple, direct way to prove that all base
$b$ comma sequences are finite: Check that $G_b'$ contains no cycles. The
runtime of this depends directly on $L(b)$, because Lemma~\ref{degrees} implies
that $G_b'$ is the disjoint union of independent paths and cycles. Our initial
calculations, limited to bases no higher than 23, used this idea. However, there
is a far better method.

\begin{proposition}
    Comma sequences with arbitrary initial conditions in base $b$ terminate if none of the points $(d, u, 0)$ lie on a cycle. 
\end{proposition}
Because edges in $G_b'$ are of the form
\begin{equation*}
    (d, u, \kappa) \to (d', u', \kappa + 1),
\end{equation*}
any cycle in $G_b'$ would have to contain every value of $\kappa \bmod L(b)$. In
particular, to check that no cycle exists, it suffices to check that no point
$(d, u, 0)$ leads to a cycle. We carried out this computation for $b \in \{3, 4,
\dots, 633\}$ and observed that there were no cycles in $G_b'$, thereby proving
that all comma sequences in these bases are finite.

\section{The geometric model}
\label{sec:simple}

\begin{definition}
    Let $D(b)$ be the number of comma sequences in base $b \geq 2$ with initial
    values in $[b^k - b^2, b^k)$ (with $k > 2$) that do not reach $b^k$.
    (As mentioned in the previous section, this quantity is independent of $k$.)
\end{definition}

The probabilistic model of \cite{neil} makes the simplifying assumption that
comma sequences enter the danger intervals $[b^k - b^2, b^k)$ uniformly
randomly, and that each interval should be considered independent. Under this
model we would expect to hit a landmine after $b^2 / D(b)$ intervals. In this
section we will prove that $D(b + 1) - D(b)$ is essentially \oeis{A136107}, the number
of ways to write $b + 1$ as the difference of positive triangular numbers. This
will establish the generating function identity \eqref{tri-gf}, and as a
corollary the asymptotic estimate $$\frac{b^2}{D(b)} \sim \frac{2b}{\log 2b}.$$
While this result is correct, the underlying model is slightly off. See
Section~\ref{graph-model} for better estimates.

For brevity, we will refer to the base $b$ integer $(b - 1, b - 1, \dots, b - 1,
x, y)_b$ as $\{x, y\}_b$.

\begin{lemma}
    \label{lemma-preds}
    For $y>0$, $\{x,y\}_{b - 1}$ is the parent of $\{r,s\}_{b - 1}$ in a comma
    sequence for base $b - 1$ if and only if $\{x + 1, y\}_b$ is the parent of
    $\{r+1, s\}_b$ in base $b$.
\end{lemma}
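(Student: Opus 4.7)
The plan is to carry out the comma step in each base directly, digit by digit, and observe that the two computations match under the stated correspondence $(x,y) \leftrightarrow (x+1,y)$ and $(r,s) \leftrightarrow (r+1,s)$.

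The first step is to unpack the base $b-1$ side. Since $\{x, y\}_{b-1}$ lies in some danger interval $[(b-1)^M - (b-1)^2, (b-1)^M)$, its last digit is $y$, and a successor of the form $\{r, s\}_{b-1}$ in the same interval has first digit $b - 2$. The comma rule therefore forces
\[
    \{r, s\}_{b-1} - \{x, y\}_{b-1} = (b-1) y + (b - 2).
\]
I would work this out in base $b-1$: the units place contributes $y + (b-2)$, which by $y > 0$ produces digit $s = y - 1$ with a carry of $1$; the next digit becomes $x + y + 1$. For the result to be of the form $\{r, s\}_{b-1}$, the carry cannot reach the leading $(b-2)$'s, which requires $x + y + 1 \leq b - 2$, i.e., $x + y \leq b - 3$, and then $r = x + y + 1$.

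The second step is to repeat the same computation in base $b$ for $\{x+1, y\}_b \to \{r+1, s\}_b$. The required difference is $b y + (b-1)$; the units place again produces $y - 1$ with a carry; the next digit becomes $(x + 1) + y + 1 = x + y + 2$. The leading $(b-1)$'s are preserved precisely when $x + y + 2 \leq b - 1$, i.e., again $x + y \leq b - 3$. The resulting last two digits are $(x + y + 2, y - 1) = (r + 1, s)$ for the same $(r, s)$ as above, so the two step conditions and the two step outcomes coincide.

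The main subtlety I expect is confirming that this ``interior'' step really is the lexicographically smallest successor chosen by the comma rule, and not preempted by a cross-over successor with leading digit $z = 1$. A short size estimate shows that a cross-over is available iff $(b-1)(x + y) + y + 1 \geq (b-1)^2$, equivalent to $x + y \geq b - 2$ when $y \geq 1$; so the cross-over option is available exactly when the interior step fails to exist, and the same dichotomy plays out verbatim in base $b$. Under the common hypothesis $x + y \leq b - 3$ the interior successor is therefore the unique comma-rule successor on both sides, and the equivalence asserted by the lemma follows.
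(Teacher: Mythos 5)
Your argument is correct and is essentially the paper's proof: both perform the same digit-by-digit carry computation, showing that the in-interval step in base $b-1$ exists precisely when $x+y\le b-3$ with outcome $s=y-1$, $r=x+y+1$, and that the identical condition and outcome (shifted by one in the tens digit) govern the base $b$ step. Your extra check that the interior successor is not preempted by a cross-over successor with leading digit $1$ is a welcome addition that the paper leaves implicit, though your stated equivalence is slightly off --- when $x+y=b-2$ with $x,y>0$ the point is a landmine and \emph{neither} option is available, so cross-over availability is not exactly the negation of interior-step availability --- but only the implication ``interior step exists $\Rightarrow$ no cross-over'' is needed, and that direction of your size estimate is sound.
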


\begin{proof}
    By the comma rule, we have
    \begin{equation*}
        \{r, s\}_{b - 1} = \{x, y\}_{b - 1} + (y, b - 2)_{b - 1}.
    \end{equation*}
    There must be a carry in the units digit because $y$ is positive and $s$ is
    less than $b - 1$, so $y + (b - 2) = s + b - 1$. Because there is no carry
    in the tens digit this gives $x + y + 1 = r$. Together these imply
    $y+(b-1)=s+b$ and $(x+1)+y+1=(r+1)$, so $\{x+1,y\}_b$ is followed by $\{r+1,s\}_b$
    in base $b$. (Note that $r+1$ and $x+1$ are digits base $b$ since $r$ and
    $x$ are digits in base $b-1$.) The other direction follows the same logic. 
\end{proof}

\begin{lemma}
    \label{lemma-diff-escape}
    $D(b) - D(b - 1)$ is one more than the number of elements of the form $\{0,
    d\}_b$ with $0 < d < b - 1$ which fail to escape the base $b$ interval.
\end{lemma}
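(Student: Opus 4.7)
The plan is to use Lemma~\ref{lemma-preds} as a bijection sending $\{x, y\}_{b-1}$ to $\{x+1, y\}_b$ (valid for $y > 0$), and to track how non-escape trajectories transport under it. Since any element $\{x', y'\}_b$ with $y' = 0$ moves to $\{x', b-1\}_b$ and then overflows, and any element with $y' = b-1$ immediately escapes (either to $b^k$ itself when $x' = 0$ or past $b^k$ when $x' > 0$), every non-escape contribution to $D(b)$ comes from $1 \le y' \le b - 2$. Partitioning by $x'$, I would isolate the $x' = 0$ piece, which is exactly $E_b$ by definition, and call the remaining $x' \ge 1$ piece $N$; thus $D(b) = E_b + N$.

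To compute $N$, I would iterate Lemma~\ref{lemma-preds} to match the base-$b$ trajectory of $\{x+1, y\}_b$ step-by-step with the base-$(b-1)$ trajectory of $\{x, y\}_{b-1}$. The key is to spot the unique mismatch between the two landmine conditions: under the bijection, the base-$b$ landmine $\{1, b-2\}_b$ pairs with $\{0, b-2\}_{b-1}$, which fails the base-$(b-1)$ landmine requirement that both digits be positive and so instead escapes in base $b-1$. Every other landmine or escape in base $b$ has its status faithfully matched in base $b-1$. Hence a base-$(b-1)$ trajectory in the bijection's domain corresponds to a non-escape base-$b$ trajectory exactly when it ends at a genuine landmine or at $\{0, b-2\}_{b-1}$, yielding $N = D(b-1) + M$, where $M$ counts points in the bijection's domain whose base-$(b-1)$ trajectory terminates at $\{0, b-2\}_{b-1}$.

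The final step is to show $M = 1$. Inverting the comma map, the only in-interval predecessor of $\{0, b-2\}_{b-1}$ is $\{0, 0\}_{b-1}$, produced by the $y = 0$ branch of the dynamics; but $\{0, 0\}_{b-1}$ has $y = 0$ and so sits outside the bijection's domain. Thus only $\{0, b-2\}_{b-1}$ itself qualifies, giving $M = 1$ and $D(b) - D(b-1) = 1 + E_b$, where $E_b$ is the count appearing in the statement.

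The main obstacle I expect is keeping the trajectory correspondence tight at the boundaries of the bijection's domain (namely $y = 0$ and $y = b-1$), so that the $+1$ is recognizably accounted for by the single asymmetric landmine $\{1, b-2\}_b$ and not doubled or missed elsewhere in the argument.
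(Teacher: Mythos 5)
Your proof is correct and takes essentially the same route as the paper's: the shift bijection of Lemma~\ref{lemma-preds}, the dismissal of the $y=0$ and $y=b-1$ rows as automatic escapes, the identification of the $x=0$ column with the count in the statement, and the single asymmetric landmine $\{1, b-2\}_b$ (image of the non-landmine $\{0, b-2\}_{b-1}$) supplying the $+1$. If anything, your predecessor computation showing $M=1$ makes explicit a point the paper leaves implicit, namely that no other in-interval trajectory feeds into $\{1, b-2\}_b$.
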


\begin{proof}
    If $\{r, s\}_{b - 1}$ is a landmine in base $b - 1$, then $\{r + 1, s\}_b$
    is a landmine in base $b$. By Lemma~\ref{lemma-preds}, if $x, y > 0$ and
    $\{x, y\}_{b - 1}$ leads to the landmine $\{r, s\}_{b - 1} \neq \{x, y\}_{b
    - 1}$ in base $b - 1$, then $\{x + 1, y\}_b$ leads to $\{r + 1, s\}_b$ in
    base $b$. Together these account for $D(b - 1)$ failed escapes. Elements of
    the form $\{x,0\}_b$ are followed by $\{x,b-1\}_b$ (with a difference of
    $(0, b-1)_b$) and those escape with a difference of $(b - 1, 1)_b$.
    Therefore, the only possible ``new'' escapes are (1) the landmine $\{1, b -
    2\}_b$ not covered by the map $\{r, s\}_{b - 1} \mapsto \{r + 1, s\}_b$ and
    (2) elements of the form $\{0, y\}_b$ with $0 < y < b - 1$.
\end{proof}

\begin{lemma}
    \label{lemma-triangular}
    With $0 < y < b - 1$, the element $\{0, y\}_b$ fails to escape the interval
    in base $b$ if and only if there is a positive integer $n$ such that
    \begin{equation}
        \label{b-eqn}
        b = \frac{(n + 1)(2(y + 1) - n)}{2},
    \end{equation}
    with a single exception if $b$ is a triangular number. If $b = (k + 1)(k +
    2)/2$ for some integer $k$, then the element $\{0, k\}_b$ escapes despite the
    solution $n = y = k$.
\end{lemma}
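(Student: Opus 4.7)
The plan is to follow the comma map explicitly starting at $\{0, y\}_b$, writing the state as a digit pair $\{x_i, y_i\}_b$ with $\{x_0, y_0\} = \{0, y\}$. A direct base-$b$ addition analogous to the one in the proof of Lemma~\ref{lemma-preds} shows that, so long as the next term remains in the danger interval $[b^m - b^2, b^m)$, the comma map acts by $\{x, y\}_b \mapsto \{x + y + 1, y - 1\}_b$ when $y > 0$ and $x + y + 1 < b$, and by $\{x, 0\}_b \mapsto \{x, b - 1\}_b$ (the latter is already recorded in Lemma~\ref{lemma-diff-escape}). The sequence escapes the interval whenever the naive update would give $x + y + 1 > b$ (the carry then propagates through the leading $(b-1)$'s, sending the value past $b^m$), and it hits a landmine precisely when $x + y + 1 = b$ with $x > 0$, matching the landmine characterization from \cite{neil}.

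Iterating this recurrence from $\{0, y\}$ is immediate: $y_i = y - i$ and $x_i = i(2(y+1) - (i - 1))/2$, so the ``attempted next first coordinate'' is $x_{n+1} = (n+1)(2(y+1) - n)/2$. Since $x_1 = y + 1 > 0$ and the $x_i$ are strictly increasing while $y_i > 0$, the conditions $x_i > 0$ and $y_i < b - 1$ hold automatically for $1 \leq i \leq y$. Consequently the iteration hits a landmine exactly when $x_{n+1} = b$ for some $1 \leq n \leq y - 1$, which rearranges to $b = (n+1)(2(y+1) - n)/2$. If no such $n$ exists and the recurrence instead runs all the way to $y_y = 0$ without ever hitting or overshooting $b$, the $\{x, 0\} \mapsto \{x, b-1\}$ rule hands off to $\{x_y, b - 1\}_b$, from which $x_y + (b-1) + 1 \geq b$ forces an escape; and if at some step $x_{n+1}$ overshoots $b$, we likewise escape immediately.

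The triangular exception is the boundary case $n = y$ of the formula: it yields $b = (y+1)(y+2)/2$, and one computes that under this relation $x_y = y(y+3)/2 = b - 1$, so the recurrence arrives at $\{b - 1, 0\}_b$ without any prior landmine. The $y_y = 0$ rule then sends us to $\{b - 1, b - 1\}_b$, which escapes via $(b - 1) + (b - 1) + 1 > b$. Hence when $b$ is triangular and $y = k$ with $b = (k+1)(k+2)/2$, the formula's ``solution'' $n = y = k$ is spurious and $\{0, k\}_b$ escapes anyway, exactly as claimed. The main difficulty is bookkeeping rather than depth: one must patiently verify the simple recurrence against the formal comma map (including its ``smallest leading digit'' tiebreaker) at every boundary case --- $x_i = 0$, $y_i = 0$, and $x_i + y_i + 1 \geq b$ --- without losing or double-counting any.
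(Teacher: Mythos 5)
Your proposal is correct and follows essentially the same route as the paper: both set up the recurrence $\{x,y\}\mapsto\{x+y+1,\,y-1\}$, solve it explicitly so that the landmine condition $x(n)+y(n)=b-1$ (equivalently your $x_{n+1}=b$) becomes \eqref{b-eqn}, and dispose of the boundary states $\{0,b-1\}_b$ (impossible since $x_i>0$ for $i\geq 1$) and $\{b-1,0\}_b$ (the triangular exception $n=y$). Your write-up is somewhat more explicit about the escape dynamics and the $\{x,0\}_b\to\{x,b-1\}_b$ handoff, but the argument is the same.
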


\begin{proof}
    If we continue the comma sequence beginning at $\{0, y\}_b$, then as long as we
    are in the interval we obtain the sequence of values $\{x(n), y(n))\}_b$ which
    satisfy $\{x(0), y(0)\}_b = \{0, y\}_b$ and the recurrences
    \begin{align*}
        y(n + 1) &= y(n) - 1 \\
        x(n + 1) &= x(n) + y(n) + 1.
    \end{align*}
    From these recurrences it is easy to prove
    \begin{equation*}
        x(n) + y(n) = (n + 1) y - \frac{n(n - 1)}{2},
    \end{equation*}
    and so $x(n) + y(n) = b - 1$ is equivalent to \eqref{b-eqn}.

    If $\{0, y\}_b$ hits a mine in $n$ steps then $x(n) + y(n) = b - 1$, which
    proves half of our lemma. For the converse, suppose that there is a positive
    integer $n$ for which \eqref{b-eqn} has a solution. Then we have $x(n) +
    y(n) = b - 1$, but this does not mean that the sequence terminates. The
    possibilities are as follows:
    \begin{enumerate}
        \item We hit a mine.
        \item We hit $\{0, b - 1\}_b$. We excluded this with $d < b - 1$.
        \item We hit $\{b - 1, 0\}_b$. In this case, we have $y(n) = n - y = 0$, so
            $n = y$. Plugging this into \eqref{b-eqn} shows that
            \begin{equation*}
                b = \frac{(n + 1)(n + 2)}{2}.
            \end{equation*}
            Therefore when $b$ is a triangular number, there is precisely one
            $y$ which leads to a solution to \eqref{b-eqn}, but should not be
            counted.
    \end{enumerate}
    It follows that solutions to \eqref{b-eqn} determine all elements $\{0,
    y\}_b$ which fail to escape the interval, except for when $b$ is triangular.
\end{proof}

\begin{lemma}
    \label{lemma-quadratic}
    The number of $0 < y < b - 1$ for which there are positive integer
    solutions to \eqref{b-eqn} is one less than the number of odd divisors of $b$.
\end{lemma}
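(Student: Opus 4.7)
The plan is to clear denominators in \eqref{b-eqn} and reinterpret it as a factorization problem for $2b$. Multiplying through, \eqref{b-eqn} becomes $2b = (n+1)(2(y+1) - n)$. Setting $a = n+1$ and $c = 2(y+1) - n$, this reads $ac = 2b$ with $a + c = 2y + 3$; conversely, any pair of positive integers $(a,c)$ with $ac = 2b$ and $a + c$ odd recovers $y = (a+c-3)/2$ and a positive integer $n$ (take $n = a - 1$ or $n = c - 1$, whichever is at least $1$).

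The next step is to collapse to unordered factorizations. Since $y$ depends only on $a + c$, and since for a fixed product $2b$ and fixed sum $a + c$ the unordered pair $\{a,c\}$ is determined as the roots of $X^2 - (a+c)X + 2b$, distinct $y$'s correspond bijectively to unordered factorizations $\{a,c\}$ of $2b$ with $a + c$ odd. The parity condition forces exactly one element of the pair to be even; writing the odd element as $d$, the divisibility $d \mid 2b$ with $d$ odd yields $d \mid b$, and the assignment $d \mapsto \{d, 2b/d\}$ sets up a bijection between odd divisors of $b$ and such unordered factorizations.

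Finally, I would verify the range condition $0 < y < b - 1$. The divisor $d = 1$ produces the pair $\{1, 2b\}$ and hence $y = b - 1$, which violates the upper bound. For every other odd divisor $d \geq 3$, the pair $\{d, 2b/d\}$ has both entries in $[2, b]$, so $5 \leq a + c \leq b + 2$ and $y \in [1, (b-1)/2]$, comfortably inside the required range for $b \geq 3$; moreover the condition $n \geq 1$ is met by choosing whichever of $a, c$ is at least $2$. So exactly one odd divisor (namely $d = 1$) is discarded, leaving a count of (number of odd divisors of $b$) $- 1$.

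The crux of the argument is the reformulation $2b = ac$ together with the parity observation that pins down odd divisors of $b$; the only mild obstacle is bookkeeping at the boundary, to confirm that $\{1, 2b\}$ is the unique factorization whose associated $y$ falls outside the allowed interval and that every other odd-divisor pair stays safely inside.
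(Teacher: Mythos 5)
Your proof is correct and takes essentially the same route as the paper's: both clear the denominator to get $2b = (n+1)(2(y+1)-n)$ and use the parity of the two factors (their sum $2y+3$ is odd) to put the admissible $y$'s in bijection with the odd divisors of $b$, discarding the divisor $1$. Your verification of the boundary conditions ($0 < y < b-1$ and $n \geq 1$) is actually more explicit than the paper's terse closing remark that ``$b-1$ will always be a solution and $0$ will never be.''
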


\begin{proof}
    Note that \eqref{b-eqn} is equivalent to the following quadratic in $n$:
    \begin{equation*}
        n^2 - (2y + 1)n + 2(b - 1 - y).
    \end{equation*}
    Because the linear term is odd and the constant term is positive, for any
    $y$ for which there are integer solutions, there will in fact be
    \emph{two}, one even and one odd. Thus the number of $y$ which have
    solutions is in correspondence with the number of even $n$ which lead to
    integer solutions. If $n$ is even then \eqref{b-eqn} holds if and only if
    $n + 1$ divides $b$, which establishes a correspondence with the odd
    divisors of $b$. Note that $b-1$ will always be a solution and $0$ will never be. 
\end{proof}

\begin{proposition}
    $D(b) - D(b - 1) = A136107(b)$ for $b \geq 3$.
\end{proposition}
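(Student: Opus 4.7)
The plan is to chain together Lemmas~\ref{lemma-diff-escape}, \ref{lemma-triangular}, and \ref{lemma-quadratic} into a closed form for $D(b) - D(b - 1)$, and then show that the same expression counts the representations of $b$ as a difference of positive triangular numbers. Let $\omega(b)$ denote the number of odd divisors of $b$, and let $\epsilon(b)$ equal $1$ if $b$ is triangular and $0$ otherwise.

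First, I would assemble the three preceding lemmas. Lemma~\ref{lemma-quadratic} gives $\omega(b) - 1$ values of $y \in (0, b - 1)$ for which \eqref{b-eqn} admits a positive integer solution. Lemma~\ref{lemma-triangular} says that the number of such $y$ whose comma sequence actually fails to escape equals this count minus $\epsilon(b)$; here a small sanity check is in order, namely that when $b = (k+1)(k+2)/2$ the exceptional $y = k$ really does appear among those counted by Lemma~\ref{lemma-quadratic} (via the solution $n = k$), so that subtracting is legitimate. Finally, Lemma~\ref{lemma-diff-escape} adds $1$ for the landmine $\{1, b - 2\}_b$. Adding these contributions yields
\begin{equation*}
    D(b) - D(b - 1) = \omega(b) - \epsilon(b).
\end{equation*}

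Second, I would evaluate $A136107(b)$ directly. A representation $b = T_a - T_c$ with integers $a > c \geq 1$ corresponds, via $m = a - c$ and $n = a + c + 1$, to a factorization $2b = mn$ with $m < n$ and $m + n$ odd; conversely, each such factorization recovers $(a, c)$ uniquely. The condition that $m + n$ be odd forces exactly one of $m, n$ to be odd, and this odd factor ranges precisely over the odd divisors of $b$, giving $\omega(b)$ factorizations. Imposing $c \geq 1$ rather than $c \geq 0$ removes only the factorization with $n = m + 1$ (equivalently $2b = m(m+1)$), which occurs iff $b$ is triangular. Hence $A136107(b) = \omega(b) - \epsilon(b)$, matching the expression for $D(b) - D(b - 1)$.

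The main obstacle is the bookkeeping of boundary cases: the ``$+1$'' in Lemma~\ref{lemma-diff-escape}, the triangular exception in Lemma~\ref{lemma-triangular}, the excluded divisor $d = 1$ in Lemma~\ref{lemma-quadratic}, and the $c = 0$ exclusion in the triangular-number count must all align. Once they cancel cleanly, the identity is immediate, and combining with the base case $D(2) = 0$ (read off the generating function in \eqref{tri-gf}) would complete Theorem~\ref{theorem-difference}.
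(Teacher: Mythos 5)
Your proof is correct and follows essentially the same route as the paper: chain Lemmas~\ref{lemma-diff-escape}, \ref{lemma-triangular}, and \ref{lemma-quadratic} to get $D(b) - D(b-1) = \omega(b) - \epsilon(b)$, then identify this with $A136107(b)$. The only difference is that where the paper simply cites an OEIS comment of R.~J.~Mathar for that last identification, you prove it directly via the standard bijection $b = T_a - T_c \leftrightarrow 2b = (a-c)(a+c+1)$, which makes the argument self-contained (and your boundary bookkeeping, including the check that the triangular exception $y = k$ is genuinely among the $y$ counted by Lemma~\ref{lemma-quadratic}, is sound).
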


\begin{proof}
    By Lemma~\ref{lemma-diff-escape}, $D(b) - D(b - 1)$ is one more than the
    number of elements $(0, d)$ which fail to escape the interval in base $b$.
    Lemmas~\ref{lemma-triangular} and \ref{lemma-quadratic} tells us that this
    is the number of odd divisors of $b$, except when $b = k(k - 1) / 2$ is a
    triangular number, in which case we must discard a single divisor. According
    to a comment by  R.~J.~Mathar this is equivalent to $A136107(b)$.
\end{proof}

\begin{proof}[Proof of Theorem~\ref{theorem-difference}]
    The previous proposition implies
    \begin{equation*}
        D(b) = \sum_{k = 1}^b A136107(b) - 1
    \end{equation*}
    for $b \geq 2$. According to a comment by Vladeta Jovovic in the OEIS, the
    ordinary generating function for A136107 is
    \begin{equation*}
        \sum_{n \geq 1} \frac{t^{n(n + 3)/2}}{1 - t^n}.
    \end{equation*}
    Using the formula for the generating function of partial sums \cite{wilf},
    we see that the ordinary generating function of $D(b)$ is
    \begin{equation*}
        \frac{1}{1 - t} \sum_{n \geq 1} \frac{t^{n(n + 3)/2}}{(1 - t^n)} - \frac{t^2}{1 - t},
    \end{equation*}
    which matches \eqref{tri-gf}.
\end{proof}

\section{A more accurate model}
\label{graph-model}

In Section~\ref{details} we constructed a finite graph $G_b'$ which contains no
cycles if and only if all comma sequences in base $b$ are finite. By randomly
sampling points from this graph, we obtain a different probabilistic model of
comma sequence behavior. This model predicts the lengths of comma sequences more
accurately than the geometric model in \cite{neil}, provided that there truly
are no cycles.

The expected number of danger intervals which a comma sequence survives can
reasonably be computed as
\begin{equation*}
    |V(G_b')|^{-1} \sum_{v \in V(G_b')} h(v),
\end{equation*}
where $h(v)$ is the number of edges between $v \in V(G_b')$ and the end of the
path in which it resides, and $V(G_b')$ is the set of all vertices in $G_b'$. A
simple calculation shows that this equals
\begin{equation*}
    |V(G_b')|^{-1} \sum_P {|P| \choose 2},
\end{equation*}
where the sum is over all paths $P$ in $G_b'$ and $|P|$ is number of vertices in
a path. By adjusting the denominator, we can rephrase this as an expectation in
terms of a uniformly randomly chosen path $P$:
\begin{align*}
    |V(G_b')|^{-1} \sum_P {|P| \choose 2}
    &=
    \frac{\text{\# of paths}}{|V(G_b')|} \mathbb{E} \left[ {|P| \choose 2} \right].
\end{align*}
Recall that all paths in $G_b'$ begin at a point of the form $(d, 0, \kappa)$
with $\kappa$ taken mod $L(b)$ (as in \eqref{lcm}), so there are $(b - 2) L(b)$
paths in total.

Empirical testing led us to the following conjecture.

\begin{conjecture}
    \label{path-conjecture}
    As $b \to \infty$, the path length $|P|$ is approximately exponentially
    distributed with rate close to $(b / 2 + 1)^{-1}$. In particular, $\mathbb{E}[|P|]
    \approx b / 2 + 1$ and $Var(|P|) \approx (b / 2 + 1)^2$.
\end{conjecture}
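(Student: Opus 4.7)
\section{Proof proposal}

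The plan is to treat $|P|$ as the absorption time of a Markov-like process on $V(G_b')$ and argue that, as $b \to \infty$, the path length is approximately exponentially distributed with rate $(b/2 + 1)^{-1}$. By Lemma~\ref{degrees} the graph $G_b'$ is a disjoint union of $(b - 2) L(b)$ paths, each beginning at a start vertex $(d, 0, \kappa)$ with $d > 1$ and ending at a landmine $(1, u^*, \kappa)$ of the form identified in the proof of the Minimal $u$'s Lemma. The quantities I would pin down, in order, are the mean $\mathbb{E}[|P|]$, the per-step absorption probability, and then the full distribution via a mixing argument.

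Because the paths partition $V(G_b')$, one has
\[
    \mathbb{E}[|P|] = \frac{|V(G_b')|}{(b - 2) L(b)},
\]
and the Minimal $u$'s Lemma gives exact formulas for each $|U(b, d)|$, hence a closed form for $|V(G_b')|$ in terms of $b$ and $L(b)$. My first sub-goal is to show that this ratio is asymptotic to $b/2 + 1$. The next step is to argue that the per-step absorption probability on a uniformly chosen non-terminal vertex is approximately the constant $\lambda = (b/2 + 1)^{-1}$; this is essentially forced by the mean, since there are exactly $(b - 2) L(b)$ landmines, each receiving a single incoming edge, so the fraction of vertices that transition to a landmine equals $1/\mathbb{E}[|P|]$. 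To promote the mean to a full distributional statement, I would then try to show that the iterates of the comma map $T$ equidistribute rapidly on the surviving vertices, so that consecutive absorption events behave approximately independently; under that mixing assumption, $|P|$ is close in distribution to $\operatorname{Geom}(\lambda)$, which converges to $\operatorname{Exp}(\lambda)$ as $\lambda \to 0$, and the variance claim $\operatorname{Var}(|P|) \approx (b/2 + 1)^2$ then follows automatically.

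The main obstacle is the mixing step. The map $T$ is entirely deterministic and is controlled by the residues $b^k \bmod \sum S_b(d, u)$ appearing in the definition of $L(b)$, so pseudorandomness of $T$ must be extracted from the multiplicative behavior of $b$ modulo various composite moduli --- a question closely analogous to Artin's primitive-root conjecture and apparently beyond current techniques. This is presumably why \cite{neil} and the present authors state the result only as a conjecture. A more accessible intermediate target would be to verify the moment identity $\mathbb{E}[|P|^k] \approx k!\,(b/2 + 1)^k$ by counting ordered $k$-tuples of vertices lying on a common path: $\sum_P |P|^k$ admits a direct combinatorial interpretation, and matching these moments implies convergence in distribution to the exponential by the method of moments, sidestepping a direct mixing argument.
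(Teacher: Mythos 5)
First, be aware that the paper offers no proof of this statement: it is introduced with ``Empirical testing led us to the following conjecture,'' and the introduction explicitly says the resulting estimate ``depends on a probabilistic conjecture that we cannot prove.'' Your proposal does not close that gap either --- you yourself defer the essential step (rapid equidistribution of the comma map, equivalently control of the higher moments $\sum_P |P|^k$) to an unproved mixing hypothesis, and the method-of-moments reformulation at the end only renames the difficulty, since counting $k$-tuples of vertices on a common path is exactly as hard as understanding the path-length distribution. So what you have is a heuristic program, not a proof, which is the same position the authors are in.

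Second, the one step you do claim to be able to carry out concretely fails as stated. By the Minimal $u$'s Lemma, $|U(b,d)| = \binom{b}{2} + d - 1$ for $d \neq 1$ and $|U(b,1)| = \binom{b}{2} + b - 1$, so $|V(G_b')| \sim \tfrac{1}{2} b^3 L(b)$ and your ratio $|V(G_b')|/((b-2)L(b))$ is asymptotic to $b^2/2$, not $b/2+1$; the first sub-goal is unachievable. The factor-of-$b$ discrepancy comes from conflating two notions of length: each edge of $G_b'$ advances the leading digit $d$ by only one, and a path crosses a danger interval (a power of $b$) only once per $b-1$ edges. The quantity the conjecture is really about is the number of danger intervals a path spans, i.e.\ the number of $d = 1$ vertices it contains, whose deterministic mean is $|U(b,1)|/(b-2) = \frac{(b-1)(b+2)}{2(b-2)} = b/2 + O(1)$ --- consistent with $b/2+1$ and with the paper's remark that this value may be slightly below the true mean. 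Your per-step absorption rate inherits the same error: with $(b-2)L(b)$ landmines among $\sim \tfrac{1}{2}b^3 L(b)$ vertices it comes out to $2/b^2$ per edge, not $(b/2+1)^{-1}$. If you repair the accounting by contracting each path to its $d=1$ vertices, the mean computation does go through; the distributional and variance claims remain, as you concede, out of reach.
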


With this conjecture and the preceding remarks, we may very roughly estimate the
number of danger intervals survived as
\begin{equation}
    \frac{b}{2} + 1 + O(b^{-1}).
\end{equation}
The geometric model of \cite{neil} predicts that roughly $2b / \log(2b)$
intervals would be survived. Figure~\ref{interval-estimates} gives evidence that
$b / 2 + 1$ is a better estimate. It also suggests that $b / 2 + 1$ may also be
slightly smaller than the true mean.

\begin{figure}[t]
    \centering
    \includegraphics[width=\textwidth]{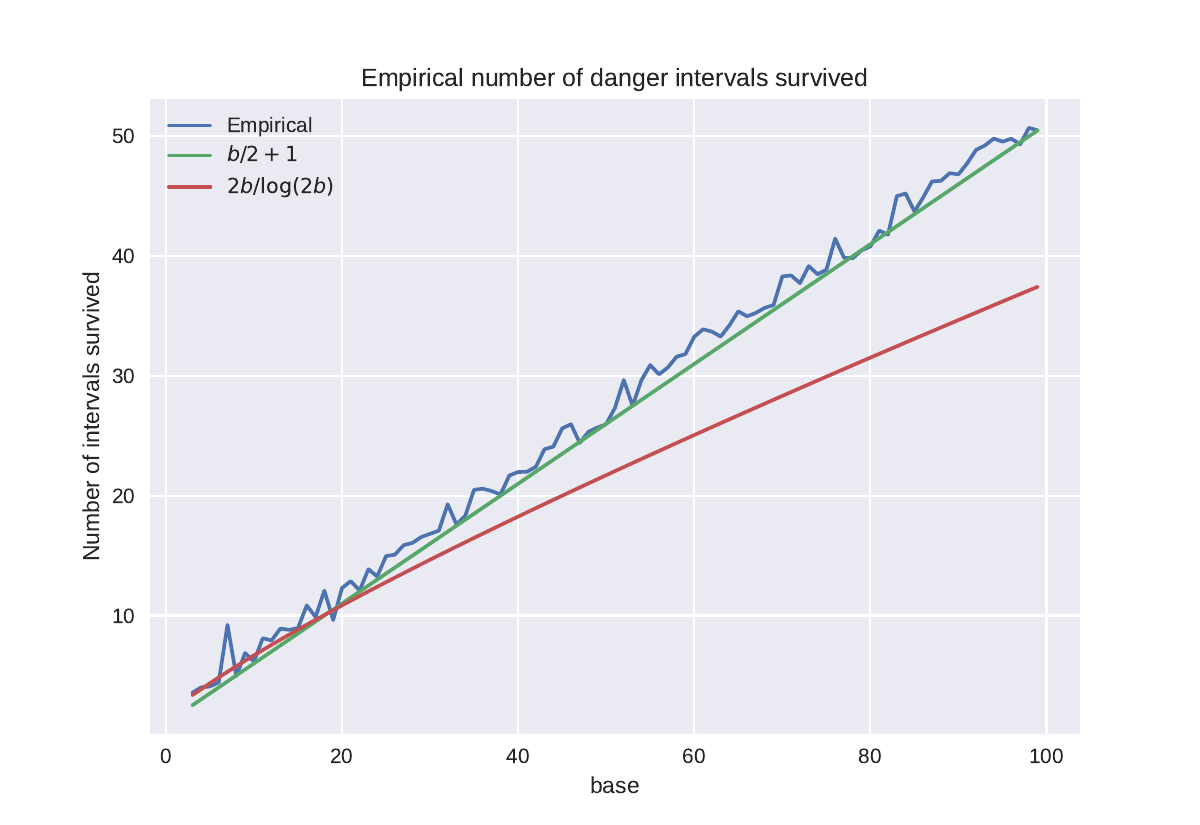}
    \caption{Average number of danger intervals survived for comma sequences in
    base $b$, estimated empirically using the initial values $1$, $2$, \dots,
    $b^2 - 1$. Note that the empirical curve begins close to $2b / \log(2b)$,
    but pulls away for later bases.}
    \label{interval-estimates}
\end{figure}

\section{Computational details}
\label{compdetails}

The considerations of Section~\ref{details} lead to Algorithm~\ref{isFinite}.
Our parallel implementation is in the Go programming language \cite{go, github}.

The runtime of our algorithm depends, roughly speaking, on the length of the
path that each point $(d, u, 0)$ lies on. In the previous section we conjectured
that paths have an average length of $b / 2 + 1$, so we expect the average
runtime to be around $O(b^4)$, without any further optimizations. We can obtain
constant time improvements by noticing that the points $(d, u, 0)$ are processed
independently, so the procedure \texttt{IsFinite} can be trivially parallelized.

\vspace{0.1in}

\begin{algorithm}
\caption{Check whether base $b$ comma sequences are finite.}
\label{isFinite}
\hrule
\begin{algorithmic}
    \Procedure {IsFinite}{$b$}
        \For {$1 \leq d < b$, $u \in U(b, d)$}
            \State $P, P_0 \gets (d, u, 0)$
            \Repeat
            \State $P \gets$ \Call{AdvancePoint}{$b$, $P$}
            \If {$P = P_0$}
                \State \Return{infinite}
            \EndIf
            \Until{isMine($P$)}
        \EndFor
        \State \Return{finite}
    \EndProcedure
    \State \hrule
    \State
    \Procedure{AdvancePoint}{$b, d, u, k$}
        \If {$d = b - 1$}
            \State $d' \gets 1$
            \State $k' \gets (k + 1) \bmod L(b)$
        \Else
            \State $d' \gets d + 1$
            \State $k' \gets k$
        \EndIf
        \State $S \gets \Call{Cycle}{d, u}$ \Comment{see \eqref{cycle}}
        \State $(k_0, l) \gets \Call{Order}{b, \sum S}$ \Comment{$b^k$ is periodic with period $l$ for $k \geq k_0$}
        \State $gap \gets (b^{k_0 + (k - k_0) \bmod l} + u) \bmod \sum S$
        \If {$gap = 0$}
            \State $gap \gets \Call{LastElem}{S}$
            \Comment {the shortcut does not apply if the gap is 0}
        \Else
            \State $i \gets 0$
            \While {$gap > S[i]$}
                \If {$gap < b^2$ and $d = b - 1$ and \Call{IsMine}{$b$, $(d', gap, k')$}}
                    \State \textbf{break}
                \EndIf
                \State $gap \gets gap - S[i]$
                \State $i \gets i + 1$
            \EndWhile
        \EndIf
        \State $u' \gets gap$
        \State \Return $(d', u', k')$
    \EndProcedure
\end{algorithmic}
\end{algorithm}

More importantly, we can save a lot of time by noticing that
\texttt{AdvancePoint} duplicates effort over repeated calls. The computations in
that procedure depends only on $d$, $u$, and $k \bmod l(d, u)$, the periods
appearing in Equation~\eqref{lcm}. So, while \texttt{IsFinite} must consider $k
\bmod L(b)$, we can cache the results of \texttt{AdvancePoint(d, u, k mod l(d,
u))}. This cache requires a memory footprint of roughly
\begin{equation*}
    O \left((\log b)^3 \sum_{d, u} l(d, u) \log l(d, u) \right),
\end{equation*}
which has not been a bottleneck thus far. (Base $b = 629$ required approximately
1.9GB of memory.)

As a minor remark, it proved difficult to find a good reference algorithm for
determining precisely when and with what period $b^k \bmod m$ repeats for
\emph{arbitrary} integers $b$ and $m \geq 1$. For completeness, this is how it
is done:
\begin{itemize}
    \item If $\gcd(b, m) = 1$, then the period equals the smallest positive $k$
        such that $b^k \equiv 1 \pmod{m}$, and the cycle begins immediately
        \cite[ch.~6]{hardywright}.

    \item If $\gcd(b, m) = g > 1$, then let $k_0$ be the smallest positive
        integer such that $\gcd(b^k, m)$ is constant for $k \geq k_0$. Then
        $b^k \equiv b^{k + l} \pmod{m}$ for $k \geq k_0$ if and only if $b^l
        \equiv 1 \pmod{m / g}$. Now apply the previous case to $b$ and $m / g$.
\end{itemize}

\paragraph{Acknowledgements} We thank Neil Sloane and Giovani Resta for bringing
this problem and several key ideas to our attention, and Luke Pebody for his
suggestion of a far superior way to search for cycles in $G_b'$. We also thank
the Office of Advanced Research Computing (OARC) at Rutgers, The State
University of New Jersey for providing access to the Amarel cluster and
associated research computing resources (\url{https://oarc.rutgers.edu}), as
well as the office of Research Computing at Dartmouth College for access to the
Discovery Cluster (\url{https://rc.dartmouth.edu/}).

\printbibliography

\end{document}